\DeclareMathOperator{\Spec}{Spec}
\DeclareMathOperator{\ran}{ran}
\DeclareMathOperator{\hor}{hor}
\theoremstyle{plain}
\newtheorem{proposition}{Proposition}[section]
\newtheorem{corollary}[proposition]{Corollary}
\newtheorem{lemma}[proposition]{Lemma}
\newtheorem{theorem}[proposition]{Theorem}
\newtheorem*{theorem*}{Theorem}
\theoremstyle{definition}
\newtheorem{definition}[proposition]{Definition}
\newtheorem{remark}[proposition]{Remark}
\numberwithin{equation}{section}
\title{Bifurcation for the constant scalar curvature equation and harmonic Riemannian submersions}
\author{Nobuhiko Otoba\thanks{Universit\"at Regensburg, 93040 Regensburg, Deutschland. \texttt{nobuhiko.otoba@ur.de}}\quad Jimmy Petean\thanks{Centro de Investigaci\'on en Matem\'aticas, Jalisco S/N, Col. Valenciana CP: 36023 Guanajuato, Gto, M\'exico. \texttt{jimmy@cimat.mx}}}
\date{}
\begin{document}

\maketitle

\begin{abstract}
We study bifurcation for the constant scalar curvature equation along a one-parameter family of Riemannian metrics on the total space of a harmonic Riemannian submersion. We provide an existence theorem for bifurcation points and a criterion to see that the conformal factors corresponding to the bifurcated metrics must be indeed constant along the fibers. In the case of the canonical variation of a Riemannian submersion with totally geodesic fibers, we characterize discreteness of the set of all degeneracy points along the family and give a sufficient condition to guarantee that bifurcation necessarily occurs at every point where the linearized equation has a nontrivial solution. In the model case of quaternionic Hopf fibrations, we show that symmetry-breaking bifurcation does not occur except at the round metric. 
\end{abstract}

\section{Introduction}
It is well known that every conformal class on a closed manifold carries a Riemannian metric of constant scalar curvature (cf.~Yamabe \cite{Yamabe:1960}, Trudinger \cite{Trudinger:1968}, Aubin \cite{Aubin:1976}, Schoen \cite{Schoen:1984}) 
while such metrics of unit volume within a conformal class are not necessarily unique (e.g.~Kobayashi \cite{Kobayashi:1985}, Schoen \cite{Schoen:1989}). 
More recently, de Lima--Piccione--Zedda \cite{deLimaPiccioneZedda:2012Poincare} introduced a setup of bifurcation problem for the constant scalar curvature equation and studied direct product Riemannian manifolds from this perspective. 
Related work about local bifurcation on the total space of a Riemannian submersion with totally geodesic fibers include \cite{BettiolPiccione:2013CalcVar}, \cite{BettiolPiccione:2013Pacific}. 
For global aspects of bifurcation in this context, see \cite{JinLiXu:2008}, \cite{Petean:2010}, \cite{HenryPetean:2014}. 

In this article, we study harmonic Riemannian submersions of constant scalar curvature and show in particular the following. 
Let $(F, t\hat{g})\to (M^m, g(t))\xrightarrow{\varphi} (N, h)$ be the canonical variation of a Riemannian submersion with totally geodesic fibers, $m\ge 3$. 
Assume both the typical fiber $(F, \hat{g})$ and the base space $(N, h)$ have constant scalar curvature, so that $g(t)$ has constant scalar curvature for each $t\in (0, \infty)$. 
We denote by $\boldsymbol{s}_{\hat{g}}$ and $\lambda_1(-\Delta_{\hat{g}})$ the scalar curvature and the first nonzero eigenvalue of the positive Laplacian acting on functions for the metric $\hat{g}$, respectively. 
Also, let $B\subset (0, \infty)$ be the set of all bifurcation instants for the constant scalar curvature equation along the family $\{g(t)\}_{t>0}$. 
\begin{theorem}\label{thm:MainTheorem}
If $\boldsymbol{s}_{\hat{g}}>0$, then there exists a sequence $\{b_l\}_{l\ge 1}\subset B$ such that $b_{l+1}<b_l$ for all $j$ and $\lim_{l\to\infty}b_l=0$. 
If moreover $\lambda_1(-\Delta_{\hat{g}})>\boldsymbol{s}_{\hat{g}}/(m-1)$, 
then there exists a real number $\varepsilon>0$ so that the following hold: 
\begin{enumerate}
\item $B\cap (0, \varepsilon)=\{b_j\mid j\ \text{large}\}$. That is, bifurcation only occurs at $b_j$'s for $t$ sufficiently small. 
\item If $b\in B\cap (0, \varepsilon)$ and if $\tilde{g}=e^{2f}g(b)$ is a constant scalar curvature metric sufficiently close to $g(b)$, then the conformal factor $f$ has to be constant along the fibers of $\varphi$. 
\end{enumerate}
Also, $B$ is discrete if $\boldsymbol{s}_{\hat{g}}/(m-1)$ is not a nonzero eigenvalue of $-\Delta_{\hat{g}}$. 
\end{theorem}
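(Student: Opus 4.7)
The plan is to translate everything into spectral information about $-\Delta_{g(t)}$. At the reference cscm $g(t)$, the Jacobi operator of the constant scalar curvature equation is (up to a positive multiple) $-\Delta_{g(t)}-\tfrac{\boldsymbol{s}_{g(t)}}{m-1}\id$, so degeneracy occurs precisely when $\boldsymbol{s}_{g(t)}/(m-1)\in\Spec(-\Delta_{g(t)})$. Using the classical formula
\[
\boldsymbol{s}_{g(t)}=\boldsymbol{s}_{h}+\frac{\boldsymbol{s}_{\hat{g}}}{t}-t\,|A|^{2}
\]
for the canonical variation of a Riemannian submersion with totally geodesic fibers, the hypothesis $\boldsymbol{s}_{\hat{g}}>0$ forces $\boldsymbol{s}_{g(t)}/(m-1)\to+\infty$ as $t\to 0^{+}$. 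On the spectral side I would invoke the standard $L^{2}(M)=\bigoplus_{k\ge0}H_{k}$ decomposition into $\mu_{k}$-fiber-eigenspaces of $-\Delta_{\hat{g}}$ (with $0=\mu_{0}<\mu_{1}\le\mu_{2}\le\cdots$): since the fibers are totally geodesic, $-\Delta_{g(t)}$ preserves each $H_{k}$, restricts on $H_{0}$ to the $t$-independent pullback of $-\Delta_{h}$, and on $H_{k}$ for $k\ge1$ splits as $\mu_{k}/t$ plus a horizontal piece depending analytically on $t$, producing a lower eigenvalue bound $\mu_{k}/t$ on $H_{k}$.

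\textbf{Existence and localization.} On $H_{0}$ the degeneracy condition reduces to $\tilde\nu_{j}=\boldsymbol{s}_{g(t)}/(m-1)$ for the eigenvalues $\tilde\nu_{j}\nearrow\infty$ of $-\Delta_{h}$. Continuity in $t$ together with the blow-up at $0^{+}$ produces, for every $j$ sufficiently large, a solution $b_{j}\in(0,\infty)$ with $b_{j}\to 0$; at each such $b_{j}$ the Jacobi kernel contains a fiber-constant eigenfunction, and the earlier existence theorem in the paper promotes (infinitely many of) them to genuine bifurcation instants. Under the additional hypothesis $\lambda_{1}(-\Delta_{\hat{g}})>\boldsymbol{s}_{\hat{g}}/(m-1)$, every eigenvalue on $H_{k}$ with $k\ge1$ is at least $\mu_{k}/t\ge\lambda_{1}(-\Delta_{\hat{g}})/t$, whereas $\boldsymbol{s}_{g(t)}/(m-1)=\boldsymbol{s}_{\hat{g}}/((m-1)t)+O(1)$. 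The positive gap $(\lambda_{1}(-\Delta_{\hat{g}})-\boldsymbol{s}_{\hat{g}}/(m-1))/t$ dominates the $O(1)$ remainder once $t<\varepsilon$ for a suitable $\varepsilon>0$, so the Jacobi kernel lies entirely in $H_{0}$ throughout $(0,\varepsilon)$. This both forces every bifurcation instant in $(0,\varepsilon)$ to be among the $b_{j}$'s (item (1)) and triggers the paper's earlier criterion that a fiber-constant kernel forces the bifurcated conformal factor to be fiber-constant (item (2)).

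\textbf{Discreteness and main obstacle.} For the last claim, fix a compact $[a,A]\subset(0,\infty)$. Weyl asymptotics combined with the $\ge\mu_{k}/t$ bound show that only finitely many $k$ contribute an eigenvalue within the range of $t\mapsto\boldsymbol{s}_{g(t)}/(m-1)$ on $[a,A]$. For each such $k$, Kato's perturbation theory supplies analytic eigenvalue branches $\nu(t)$, and $\nu(t)-\boldsymbol{s}_{g(t)}/(m-1)$ is not identically zero: for $k=0$ it is a nonconstant rational function of $t$, and for $k\ge1$ its $t\to 0^{+}$ expansion is $(\mu_{k}-\boldsymbol{s}_{\hat{g}}/(m-1))/t+O(1)$, whose leading term is nonzero by the hypothesis that $\boldsymbol{s}_{\hat{g}}/(m-1)$ is not a nonzero eigenvalue of $-\Delta_{\hat{g}}$. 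Hence the zeros are isolated, $B\cap[a,A]$ is finite, and $B$ is discrete. The principal technical obstacle throughout is the precise spectral reduction on the off-diagonal pieces $H_{k}$: establishing the decomposition $-\Delta_{g(t)}|_{H_{k}}=\mu_{k}/t+(\text{$t$-analytic horizontal operator})$ in a general canonical variation and deducing the sharp lower bound $\mu_{k}/t$. Every $1/t$-versus-$O(1)$ comparison in the argument rests on this reduction, while the remaining analytic and bifurcation-theoretic ingredients are largely standard.
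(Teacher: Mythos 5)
Your proposal tracks the paper's route quite closely: the B\'erard-Bergery--Bourguignon spectral picture, the blow-up of $\boldsymbol{s}_{g(t)}$ as $t\to 0^{+}$, Smoller--Wasserman for existence (the paper's Theorem~\ref{thm:ExistenceOfBifurcationPoint}), the gap estimate confining the Jacobi kernel to fiber-constant functions for small $t$ (Lemma~\ref{lem:OnlyHorizontalLinear}), and the double Lyapunov--Schmidt criterion for item~(2) (Theorem~\ref{thm:OnlyHorizontalNonlinear}). The one place you genuinely diverge is the discreteness claim, and there the paper's route is both simpler and safer. What you call the ``principal technical obstacle'' --- the decomposition $-\Delta_{g(t)}|_{H_k}=\mu_k/t+(\text{$t$-analytic horizontal piece})$ --- is actually a known \emph{exact} formula: for the canonical variation with totally geodesic fibers the horizontal piece is the $t$-\emph{independent} horizontal Laplacian $-\Delta_{\hor}$, so every eigenvalue of $-\Delta_{g(t)}$ has the closed form $b+\hat\lambda/t$ with $b\in\Spec(-\Delta_{\hor})$ and $\hat\lambda\in\Spec(-\Delta_{\hat g})$ both fixed in $t$; there is no remainder to perturb. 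Because of this, Kato theory is unnecessary, and your Weyl-counting-plus-analytic-branches scheme is also riskier than it looks: $\Spec(-\Delta_{\hor})$ need not be discrete (the paper cites Warning~3.2 of B\'erard-Bergery--Bourguignon on precisely this point), so one cannot simply enumerate analytic branches $\nu(t)$ for each $k$. The paper's Proposition~\ref{prop:discrete} instead multiplies the degeneracy relation $b+\hat\lambda/t=\boldsymbol{s}_{g(t)}/(m-1)$ by $t$ to obtain a quadratic in $t$, and appeals to discreteness of $\Spec(-\Delta_{\hat g})$ and of $\Spec(-\Delta_{g(1)})$ (not of $\Spec(-\Delta_{\hor})$) to make the pair $(b,\hat\lambda)$ eventually constant along any convergent sequence in $D$, leaving at most two roots. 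Finally, your claim that the $k=0$ branch difference is automatically a nonconstant rational function of $t$ uses $\boldsymbol{s}_{\hat g}>0$ or $|A|\neq 0$; this is fine under the theorem's standing hypothesis $\boldsymbol{s}_{\hat g}>0$, but the dependence should be stated.
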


This article is organized as follows. 
We define the bifurcation points for the constant scalar curvature equation along a general one-parameter family of Riemannian metrics in Sect.~\ref{sect:setting}. 
In Sect.~\ref{sect:doubleLyapunvSchmidt}, we introduce a double Lyapunov--Schmidt reduction (Lemma \ref{lem:DoubleLyapunovSchmidt}) and prove a theorem whose conclusion is that the conformal factors have to be constant along the fibers of the submersion in concern (Theorem \ref{thm:OnlyHorizontalNonlinear}). 
We show existence of bifurcation points in Sect.~\ref{sect:BifurcationTheorem}. 
The results in Sects.~\ref{sect:doubleLyapunvSchmidt}, \ref{sect:BifurcationTheorem} may be applied to harmonic Riemannian submersions. 
In Sect.~\ref{sect:TotallyGeodesic}, we specialize to the case of Riemannian submersions with totally geodesic fibers and show Theorem \ref{thm:CanonicalVariation}, which is a slightly refined version of Theorem \ref{thm:MainTheorem}. 

\section{A setup for bifurcation of constant scalar curvature metrics}\label{sect:setting}
Let $\{g(t)\}_{t\in I}$ be a $C^{\infty}$ family of constant scalar curvature metrics on a closed manifold $M^m$, $m\ge 3$. 
Here, $I$ is an open interval of $\mathbb{R}$, and the scalar curvature $\boldsymbol{s}_{g(t)}$ may depend on $t$. 
For each $t$, consider the PDE
\begin{equation}\label{eq:cscEq}
-a_m\Delta_{g(t)}u+\boldsymbol{s}_{g(t)}\left(u - u^{p_m-1}\right)=0
\end{equation}
for $u\in C^{k+2, \alpha}_{+}\left(M, g(t)\right)=\{u\in C^{k+2, \alpha}\left(M, g(t)\right)\mid u>0\}$, 
which is the Euler--Lagrange equation of the functional $E_{g(t)}: C^{k+2,\alpha}_{+}\left(M, g(t)\right)\to\mathbb{R}$ defined by
\begin{equation*}
E_{g(t)} (u) = \int_M \frac{a_m}{2} \lvert d u \rvert^2 + \boldsymbol{s}_{g(t)} \left(\frac{u^2}{2}-\frac{u^{p_m}}{p_m}\right) d\mu_{g(t)}.
\end{equation*}
Here, $a_m=\frac{4(m-1)}{m-2}$, $p_m=\frac{2m}{m-2}$. 
We are concerned with the bifurcation phenomena for the family $C^{k+2, \alpha}_+(M)\times I \to C^{k, \alpha}(M)$ of potential operators defined by the left hand side of \eqref{eq:cscEq}. 
Since the Banach spaces $C^{l, \alpha}\left(M, g(t_1)\right)$, $C^{l, \alpha}\left(M, g(t_2)\right)$ and the Hilbert spaces $L^2\left(M, g(t_1)\right)$, $L^2\left(M, g(t_2)\right)$ for $t_1, t_2\in I$ are isomorphic but not necessarily uniformly for all $t\in I$, respectively, it seems to be crucial for the study of global bifurcation along the family $\{g(t)\}$ to take into account, as in de Lima--Piccione--Zedda \cite[Appendix A]{deLimaPiccioneZedda:2012Poincare}, the change of function spaces as $t$ varies. 
However, as long as bifurcation local in $t$ is concerned, the following setup is convenient. 

\begin{definition}\label{def:ParaphraseBifurcation}
For $t_{\star}\in I$, we say $g(t_{\star})$ is a bifurcation point for the constant scalar curvature equation along $\{g(t)\}_{t\in I}$ if there exist sequences $\{t_j\}_{j\ge 1}\subset I$, $\{u_j\}_{j\ge 1}\subset C^{k+2, \alpha}(M, g(t_{\star}))$ such that $u_j$ is a nonconstant solution to \eqref{eq:cscEq} for each $j$ and $t_j\to t_{\star}$, $u_j\to 1$ as $j\to\infty$. 
\end{definition}

Recall that the scalar curvature of the conformally deformed metric $u^{p_m-2}g$, $u\in C^2_+(M)$ is equal to $u^{1-p_m}(-a_m\Delta_gu+\boldsymbol{s}_gu)$. Since the elliptic regularity for $W^{1, 2}$-critical equations due to Trudinger \cite{Trudinger:1968} shows that every solution of $C^{k+2, \alpha}_+(M, g)$ to $-a_m\Delta_{g}u+\boldsymbol{s}_{g}\left(u - u^{p_m-1}\right)=0$ is indeed $C^{\infty}$ smooth, 
$g(t_{\star})$ is a bifurcation point according to Definition \ref{def:ParaphraseBifurcation} if and only if $t_{\star}$ is a bifurcation instant in the sense of de Lima--Piccione--Zedda \cite[pp.~264--265]{deLimaPiccioneZedda:2012Poincare}, provided that sufficiently high regularity is assumed. Note that their volume normalization of the constant scalar curvature metrics is equivalent to our requirement that the conformal factors $u_j$ be nonconstant. 

\section{A double Lyapunov--Schmidt reduction}\label{sect:doubleLyapunvSchmidt}
Let $X^2, Y^2$ Banach spaces and $\iota_X: X^1\to X^2$, $\iota_Y: Y^1\to Y^2$ the inclusions of closed linear subspaces $X^1\subset X^2$, $Y^1\subset Y^2$. 
Also, let $U^2$ be an open neighborhood in $X^2$ of some $x_0\in X^1$, $U^1=U^2\cap X^1$, and $S^i: U^i\to Y^i$ a $C^1$ map such that $S^i(x_0)=0$ and that the diagram 
\begin{equation}\label{eq:OperatorCommutativity}
\xymatrix{
U^1\ar[r]^{S^1}\ar[d]^{\iota_X}&Y^1\ar[d]^{\iota_Y}\\
U^2\ar[r]^{S^2}&Y^2
}
\end{equation}
commutes. 
Assume $L^i:=\left.dS^i\right|_{x_0}: X^i\to Y^i$ is Fredholm, so that one can perform the Lyapunov--Schmidt reduction for the equation $S^i(x^i)=0$ near $x_0$ for each $i=1, 2$. 
Note that $\ker L^1\subset\ker L^2$ and $\ran L^1\subset\ran L^2\cap Y^1$ hold
since commutativity of \eqref{eq:OperatorCommutativity} implies $\iota_Y\circ L^1=L^2\circ\iota_X$. 
\begin{lemma}\label{lem:DoubleLyapunovSchmidt}
Suppose $\ker L^1=\ker L^2$, $\ran L^1=\ran L^2\cap Y^1$, and a linear subspace $W$ of $Y^1$ complements $\ran L^2$ in $Y^2$. 
Then, there exists an open neighborhood $X^2_0\subset U^2$ of $x_0$ such that $S^2(x^2)=0$ for $x^2\in X^2_0$ implies $x^2\in X^1$. 
\end{lemma}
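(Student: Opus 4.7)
The strategy is to run the Lyapunov--Schmidt reduction for $S^{1}$ and for $S^{2}$ simultaneously, using compatible direct-sum decompositions, and then exploit the commutativity of \eqref{eq:OperatorCommutativity} to show the two reductions produce the \emph{same} bifurcation equation, parametrized by the same $k\in K:=\ker L^{1}=\ker L^{2}$. This will force the solution of $S^{2}(x^{2})=0$ found by the reduction in the ambient space $X^{2}$ to already lie in the smaller subspace $X^{1}$.

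First I would produce compatible splittings on both sides. On the source side, $K$ is finite dimensional, so I pick a bounded projection $P^{1}\colon X^{1}\to K$ onto $K$, set $V^{1}=\ker P^{1}$, and extend each of the finitely many coordinate functionals of $P^{1}$ from $X^{1}$ to $X^{2}$ via Hahn--Banach. This yields a bounded projection $P^{2}\colon X^{2}\to K$ extending $P^{1}$; letting $V^{2}=\ker P^{2}$ gives $X^{2}=K\oplus V^{2}$, $V^{1}\subset V^{2}$, and the crucial identity $V^{2}\cap X^{1}=V^{1}$. On the target side, the hypothesis provides $W\subset Y^{1}$ complementing $\ran L^{2}$ in $Y^{2}$. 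The assumption $\ran L^{1}=\ran L^{2}\cap Y^{1}$ then gives, for any $y\in Y^{1}$ with decomposition $y=r+w$ in $\ran L^{2}\oplus W$, that $r=y-w\in Y^{1}\cap \ran L^{2}=\ran L^{1}$. Hence $Y^{1}=\ran L^{1}\oplus W$ and the canonical projection $\pi_{\ran L^{2}}\colon Y^{2}\to \ran L^{2}$ restricts to $\pi_{\ran L^{1}}\colon Y^{1}\to \ran L^{1}$.

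With these splittings in place, the classical implicit function theorem applied to the auxiliary maps $(k,v^{i})\mapsto \pi_{\ran L^{i}}\bigl(S^{i}(k+v^{i})\bigr)$ produces, on some neighborhood of $0$ in $K$, $C^{1}$ functions $v^{1}(k)\in V^{1}$ and $v^{2}(k)\in V^{2}$ characterized \emph{uniquely} by $v^{i}(0)=0$ and $\pi_{\ran L^{i}}(S^{i}(k+v^{i}(k)))=0$. The key point is that $v^{1}(k)\in V^{1}\subset V^{2}$ is admissible as a candidate for $v^{2}(k)$: commutativity of \eqref{eq:OperatorCommutativity} gives $S^{2}(k+v^{1}(k))=\iota_{Y}S^{1}(k+v^{1}(k))$, and by construction $\pi_{\ran L^{1}}(S^{1}(k+v^{1}(k)))=0$, so $S^{1}(k+v^{1}(k))\in W$, hence $S^{2}(k+v^{1}(k))\in W\subset \ker \pi_{\ran L^{2}}$. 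By uniqueness in the implicit function theorem, $v^{2}(k)=v^{1}(k)$ for all $k$ in a small neighborhood of $0$.

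Finally, let $X^{2}_{0}$ be a neighborhood of $x_{0}$ small enough that every solution $x^{2}\in X^{2}_{0}$ of $S^{2}(x^{2})=0$ can be written uniquely as $x^{2}=x_{0}+k+v^{2}$ with $v^{2}=v^{2}(k)$ by the Lyapunov--Schmidt reduction of $S^{2}$. Then $v^{2}=v^{1}(k)\in V^{1}$, so $x^{2}\in x_{0}+K+V^{1}\subset X^{1}$, which is the desired conclusion. I expect the only delicate point to be the simultaneous choice of complements, in particular arranging $V^{1}\subset V^{2}$ and $V^{2}\cap X^{1}=V^{1}$; once this bookkeeping is set up, the rest is a straightforward matching of the two implicit function theorem statements via the commutative square.
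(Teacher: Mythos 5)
Your proof is correct and follows the same strategy as the paper's: set up compatible Lyapunov--Schmidt reductions for $S^1$ and $S^2$, observe that the commuting square \eqref{eq:OperatorCommutativity} together with the common complement $W$ forces the two auxiliary equations to have the same solution map out of the common kernel, and invoke uniqueness in the implicit function theorem. The only difference is in how you build the compatible source splittings: the paper simply takes an arbitrary closed complement $V^2$ of $N=\ker L^1=\ker L^2$ inside $X^2$ and notes (using $N\subset X^1$) that $V^1:=V^2\cap X^1$ automatically complements $N$ in $X^1$, whereas you work in the opposite direction, starting from a projection $P^1\colon X^1\to N$ and extending its finitely many coordinate functionals to $X^2$ via Hahn--Banach. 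Both constructions are valid and yield the crucial identity $V^2\cap X^1=V^1$; the paper's is a bit more economical since it avoids Hahn--Banach, but the core argument is the same.
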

\begin{proof}
Let $N:=\ker L^1=\ker L^2$ be the finite-dimensional kernel. 
Take a closed linear subspace $V^2$ of $X^2$ which complements $N$ in $X^2$, and define the closed linear subspace $V^1:=V^2\cap X^1$ of $X^1$ so that $V^1$ complements $N$ in $X^1$. 
Also, let $R^i:=\ran L^i\subset Y^i$ be the closed finite-codimensional range. 
The linear subspace $W$ in the assumption is necessarily finite-dimensional and complements $\ran L^1$ in $Y^1$. 
We denote by $P_{R^i}: Y^i\to Y^i$ the bounded projection onto $R^i$ relative to $W$. 
Note that 
\begin{equation}\label{eq:ProjectionInclusionCommutativity}
\xymatrix{
Y^1\ar[d]^{\iota_Y}\ar[r]^{P_{R^1}}&Y^1\ar[d]^{\iota_Y}\\
Y^2\ar[r]^{P_{R^2}}&Y^2
}
\end{equation}
commutes. 
Consider the auxiliary operator $\varphi^i:=P_{R^i}\circ S^i: U^i\to R^i$. Since $\left.\frac{\partial\varphi^i}{\partial v^i}\right|_{0}: V^i\to R^i$ is an isomorphism of Banach spaces, the implicit function theorem implies that there exist open neighborhoods $0\in N_0\subset N$, $0\in V^i_0\subset V^i$ and $C^1$ maps $\alpha^i: N_0\to V^i_0$ such that $N_0\times V^i_0\subset U^i$ and that 
\begin{equation}\label{eq:AuxiliaryEq}
\text{if $n\in N_0$, $v^i\in V^i_0$, then $\varphi^i(n+v^i)=0$ $\Longleftrightarrow$ $v^i=\alpha^i(n)$.}
\end{equation}
We may assume $V^1_0=V^2_0\cap X^1$ since $V^1=V^2\cap X^1$. 

We claim $\alpha^1=\alpha^2$. To show this, take an arbitrary $n\in N_0$. It follows from commutativity of diagrams \eqref{eq:OperatorCommutativity}, \eqref{eq:ProjectionInclusionCommutativity} that the equation $\varphi^1\left(n+\alpha^1(n)\right)=P_{R^1}S^1\left(n+\alpha^1(n)\right)=0$ already implies $\varphi^2\left(n+\alpha^1(n)\right)=0$. Hence \eqref{eq:AuxiliaryEq} with $i=2$ yields $\alpha^1(n)=\alpha^2(n)$. 

The open neighborhood $X^2_0:=N_0\times V_0^2\subset U^2$ satisfies the desired property. Indeed, if $S^2(x^2)=0$ for $x^2=n+v^2\in X^2_0$, then $v^2=\alpha^2(n)=\alpha^1(n)\in V^1_0\subset X^1$, whence $x^2\in X^1$.  
\end{proof}

\begin{theorem}\label{thm:OnlyHorizontalNonlinear}
Let $\varphi: (M^m, g)\to (N^n, h)$ be a harmonic Riemannian submersion of closed Riemannian manifolds. 
Assume $m\ge 3$ and the scalar curvature $\boldsymbol{s}_g$ of $g$ is constant along the fibers of $\varphi$. 
Assume also that $\Delta_g u+\frac{\boldsymbol{s}_g}{m-1}u=0$ for $u\in C^{k+2, \alpha}(M, g)$ implies $u$ is constant along the fibers of $\varphi$. 
Then, there exists an open neighborhood $U$ of $u\equiv 1$ in $C^{k+2, \alpha}_+(M, g)$ with the following property: 
If $u\in U$ and the scalar curvature of $u^{p_m-2}g$ is equal to $\boldsymbol{s}_g$, then $u$ is constant along the fibers of $\varphi$. 
\end{theorem}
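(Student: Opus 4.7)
The plan is to apply Lemma \ref{lem:DoubleLyapunovSchmidt} with $X^2 = C^{k+2,\alpha}(M, g)$ and $Y^2 = C^{k, \alpha}(M, g)$, and with $X^1 \subset X^2$ and $Y^1 \subset Y^2$ the closed subspaces of functions constant along the fibers of $\varphi$. We take $U^2$ to be an open neighborhood of $u \equiv 1$ inside $C^{k+2,\alpha}_+(M, g)$, set $U^1 = U^2 \cap X^1$, and define
\begin{equation*}
S^i(u) := -a_m\Delta_g u + \boldsymbol{s}_g\bigl(u - u^{p_m - 1}\bigr),\quad u \in U^i,
\end{equation*}
so that $S^2(u) = 0$ is precisely the condition that $u^{p_m-2}g$ have scalar curvature $\boldsymbol{s}_g$, and $S^2(1) = 0$. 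Granted the hypotheses of the lemma, the conclusion --- that solutions of $S^2 = 0$ near $u \equiv 1$ automatically lie in $X^1$ --- is exactly the desired statement, with the required $U$ taken to be the intersection of the neighborhood $X^2_0$ produced by the lemma with $C^{k+2,\alpha}_+(M, g)$. For commutativity of diagram~\eqref{eq:OperatorCommutativity}, it suffices to know that $\boldsymbol{s}_g$ is fiber-constant (hypothesis) and that $\Delta_g$ preserves $X^1$; the latter is exactly the identity $\Delta_g(\tilde{u}\circ\varphi) = (\Delta_h\tilde{u})\circ\varphi$ for $\tilde{u}\in C^\infty(N)$, which is equivalent to harmonicity of $\varphi$.

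A direct computation of the linearization $L^i := dS^i|_1$, using $a_m/(m-1) = p_m - 2$, yields
\begin{equation*}
L^i v = -a_m\Bigl(\Delta_g v + \frac{\boldsymbol{s}_g}{m-1} v\Bigr).
\end{equation*}
The standing hypothesis forces $\ker L^2 \subset X^1$, whence $\ker L^1 = \ker L^2$. The operator $L^2$ is elliptic and formally self-adjoint with respect to the $L^2(M, g)$ inner product, so it is Fredholm of index zero from $X^2$ to $Y^2$, with topological direct sum decomposition $Y^2 = \ker L^2 \oplus \ran L^2$. Because $\ker L^2 = \ker L^1$ already lies in $Y^1$, one may take $W := \ker L^2$ as the complement required by the lemma.

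The step I expect to be the main technical point is the range identity $\ran L^1 = \ran L^2 \cap Y^1$. The inclusion $\subset$ is immediate. For the converse, suppose $v \in Y^1$ satisfies $v = L^2 u$ for some $u \in X^2$, and decompose $u = \bar{u} + u_\perp$, where $\bar{u} \in X^1$ is the fiber average of $u$ (a bounded projection since $\varphi$ is a smooth fiber bundle with compact fibers) and $u_\perp$ has zero fiber mean. Since $L^2\bar{u} = L^1\bar{u} \in Y^1$, we also have $L^2 u_\perp = v - L^2\bar{u} \in Y^1$. On the other hand, for any fiber-constant test function $w\in X^1$, self-adjointness of $L^2$ together with $L^2 w \in Y^1$ gives
\begin{equation*}
\langle L^2 u_\perp,\, w\rangle_{L^2(M, g)} = \langle u_\perp,\, L^2 w\rangle_{L^2(M, g)} = 0,
\end{equation*}
because $u_\perp$ is $L^2(M, g)$-orthogonal to fiber-constants. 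Hence $L^2 u_\perp$ lies in $Y^1$ and simultaneously in its $L^2$-orthogonal complement, forcing $L^2 u_\perp = 0$ and $v = L^1 \bar{u} \in \ran L^1$. With every hypothesis of Lemma \ref{lem:DoubleLyapunovSchmidt} verified, the lemma delivers the required neighborhood $U$.
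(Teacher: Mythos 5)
Your overall strategy is exactly the paper's: set $X^2=C^{k+2,\alpha}(M,g)$, $Y^2=C^{k,\alpha}(M,g)$, $X^1, Y^1$ the subspaces of functions constant along fibers, $S^i$ the Yamabe operator, and invoke Lemma~\ref{lem:DoubleLyapunovSchmidt}. Where you diverge is in how you verify the range condition $\ran L^1 = \ran L^2 \cap Y^1$. The paper never checks this identity directly. Instead it pushes the problem down to the base: since $\varphi^*\colon C^{l,\alpha}(N,h)\to Y^1$ (resp.\ $X^1$) is a Banach-space isomorphism and $\varphi$ is Laplacian-commuting, the operator $L^1$ is conjugate via $\varphi^*$ to the elliptic self-adjoint operator $J(v)=-a_m(\Delta_h v+\tfrac{\boldsymbol{q}}{m-1}v)$ on $N$, so the Fredholm decomposition $C^{k,\alpha}(N,h)=\ker J\oplus\ran J$ pulls back to $Y^1=W\oplus\ran L^1$. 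Combined with $Y^2=W\oplus\ran L^2$ and the automatic inclusion $\ran L^1\subset\ran L^2\cap Y^1$, this forces the range identity. Your proof verifies it head-on over $M$: you introduce the fiber-averaging projection, note that the coarea formula makes it the $L^2(M,g)$-orthogonal projection onto fiber-constants, and then use formal self-adjointness of $L^2$ to show $L^2 u_\perp$ is both in $Y^1$ and $L^2$-orthogonal to a dense subset of $Y^1$, hence zero. Both arguments are correct; yours is perhaps more self-contained in that it avoids introducing the auxiliary operator $J$ and the pulled-back Fredholm theory, but it does lean on the claim that fiber averaging is a bounded operator on $C^{k+2,\alpha}(M,g)$ --- true for a smooth fiber bundle with closed fibers, but a point that deserves a word of justification, and one the paper's route bypasses entirely by working with the isomorphism $\varphi^*$ instead. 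A minor omission on your part: the lemma's standing assumption is that both $L^1$ and $L^2$ are Fredholm; you verify this for $L^2$ but not explicitly for $L^1$ (it follows from your range identity plus finiteness of $\ker L^1=\ker L^2$, but it should be stated; the paper gets it for free from the conjugacy with $J$).
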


\begin{proof}
In the notation of Lemma \ref{lem:DoubleLyapunovSchmidt}, 
set 
$X^2=C^{k+2, \alpha}(M, g)$, $Y^2=C^{k, \alpha}(M, g)$, 
$X^1=\{u\in C^{k+2, \alpha}(M, g)\mid \text{$u(p)=u(q)$ if $\varphi(p)=\varphi(q)$}\}$, 
$Y^1=X_1\cap Y^2$, 
$U^i=\{u\in X^i\mid u>0\}$, $x_0=u\equiv 1$, 
$S^i(u)=-a_m\Delta_gu+\boldsymbol{s}_g(u-u^{p_m-1})$. 
It follows 
$L^i(u)=-a_m\left(\Delta_g u+\frac{\boldsymbol{s}_g}{m-1}u\right)$. 

By hypothesis, $\ker L^1=\ker L^2=:W$. 
It follows from the Fredholm alternative (cf.~Besse \cite[p.~464]{Besse:1987}) and the essentially self-adjointness of $\Delta_g$ as the densely defined operator on $L^2(M, g)$ that $W$ is finite-dimensional and complements the closed linear subspace $\ran L^2$ in $Y^2$.  

We claim that $\ran L^1$ is a closed linear subspace which complements $W$ in $Y^1$. 
To see this, let $\boldsymbol{q}$ be the $C^{\infty}$ function on $N$ such that $\varphi^*\boldsymbol{q}=\boldsymbol{s}_g$, define $J: C^{k+2, \alpha}(N, h)\to C^{k, \alpha}(N, h)$ by $J(v)=-a_m\left(\Delta_h v+\frac{\boldsymbol{q}}{m-1}v\right)$, and look at the direct sum decomposition $C^{k, \alpha}(N, h)=\ker J+\ran J$ into closed linear subspaces as for $(M, g)$ in the previous paragraph. Since $\varphi$ is a Riemannian submersion, the map $\varphi^*: C^{k, \alpha}(N, h)\to Y^1$ is an isomorphism of Banach spaces. Furthermore, since $\varphi$ is Laplacian-commuting, $\varphi^*(\ker J)=W$, $\varphi^*(\ran J)=\ran L^1$. Hence $Y^1=W+\ran L^1$ is a direct sum decomposition into closed linear subspaces. 

Apply Lemma \ref{lem:DoubleLyapunovSchmidt} to get an open neighborhood $X_0^2=:U$ of $x_0=u\equiv 1$ in $U^2=C^{k+2, \alpha}_+(M, g)$ such that $S^2(u)=-a_m\Delta_gu+\boldsymbol{s}_g(u-u^{p_m-1})=0$ for $u\in U$ implies $u$ is constant along the fibers of $\varphi$. This $U$ has the desired property since the scalar curvature of $u^{p_m-2}g$ is equal to $u^{1-p_m}\left(-a_m\Delta_gu+\boldsymbol{s}_gu\right)$. 
\end{proof}

\section{Existence of bifurcation points}\label{sect:BifurcationTheorem}

A nonzero crossing number detects bifurcation for a potential operator, 
while a nonzero even topological degree for an operator without potential does not always imply bifurcation (cf.~Nirenberg \cite[p.~46]{Nirenberg:1974}). 
There are various such bifurcation theorems for potential operators in the literature (cf.~Krasnosel'skii \cite{Krasnoselskii:1964}, Rabinowitz \cite{Rabinowitz:1977}, Kielh\"ofer \cite{Kielhofer:1988}, \cite[p.~193, 240]{Kielhofer:2012}). 
For a potential operator whose linearization is diagonalizable, the Lyapunov--Schmidt reduction due to Smoller--Wasserman \cite{SmollerWasserman:1990} is convenient. 

\begin{theorem}\label{thm:ExistenceOfBifurcationPoint}
Let $I$ be an open interval of $\mathbb{R}$ and $\varphi: (M^m, g(t))\to (N^n, h)$ a harmonic Riemannian submersion of closed Riemannian manifolds for all $t\in I$, where $\varphi$, $M$, $N$, $h$ are fixed. 
Suppose $m\ge 3$, $g(t)$ depends on $t$ $C^{\infty}$ smoothly, and each $g(t)$ has constant scalar curvature. 
If $\boldsymbol{s}_{g(t_{\star})}/(m-1)$ is a nonzero eigenvalue of $-\Delta_h$ for $t_{\star}\in I$, and if there are sequences $\{r_j\}_{j\ge 1}$, $\{s_j\}_{j\ge 1}\subset I$ such that 
\begin{align}
&r_j<t_{\star}<s_j, \quad\lim_{j\to\infty}r_j=\lim_{j\to\infty}s_j=t_{\star}, \label{eq:sequence}\\
&\left(\boldsymbol{s}_{g(r_j)}-\boldsymbol{s}_{g(t_{\star})}\right)\left(\boldsymbol{s}_{g(s_j)}-\boldsymbol{s}_{g(t_{\star})}\right)<0, \label{eq:key}
\end{align}
then $g(t_{\star})$ is a bifurcation point for the constant scalar curvature equation along $\{g(t)\}_{t\in I}$. 
\end{theorem}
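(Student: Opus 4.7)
The plan is to reduce the bifurcation problem on $(M,g(t))$ to a parameter-dependent scalar problem on the fixed manifold $(N,h)$ via the pullback $\varphi^{*}$, and then to invoke a standard bifurcation theorem for potential operators driven by a change of Morse index. Since $\varphi$ is a harmonic Riemannian submersion for every $t$, one has $\Delta_{g(t)}\circ\varphi^{*}=\varphi^{*}\circ\Delta_{h}$, and since $\boldsymbol{s}_{g(t)}$ is constant on $M$ by assumption, the ansatz $u=\varphi^{*}v$ turns \eqref{eq:cscEq} into
\[
F_{t}(v):=-a_{m}\Delta_{h}v+\boldsymbol{s}_{g(t)}\bigl(v-v^{p_{m}-1}\bigr)=0
\]
on the \emph{fixed} Banach space $C^{k+2,\alpha}(N,h)$. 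A non-constant solution $v$ of $F_{t}(v)=0$ near $v\equiv 1$ lifts via $u=\varphi^{*}v$ to a non-constant solution of \eqref{eq:cscEq} near $u\equiv 1$ in $C^{k+2,\alpha}(M,g(t_{\star}))$, so it is enough to prove bifurcation for the reduced family $\{F_{t}\}$ at $t=t_{\star}$.

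Next I would verify that $\{F_{t}\}$ fits the framework of a gradient-operator bifurcation theorem (e.g.\ Smoller--Wasserman \cite{SmollerWasserman:1990} or Kielh\"ofer \cite{Kielhofer:2012}). The map $F_{t}$ is the $L^{2}$-gradient of the smooth $t$-family of functionals
\[
\widetilde{E}_{t}(v)=\int_{N}\!\left(\frac{a_{m}}{2}|dv|^{2}+\boldsymbol{s}_{g(t)}\!\left(\frac{v^{2}}{2}-\frac{v^{p_{m}}}{p_{m}}\right)\right)\!d\mu_{h},
\]
and its linearization at $v\equiv 1$ is the self-adjoint elliptic Fredholm operator $F'_{t}(1)=-a_{m}(\Delta_{h}+\boldsymbol{s}_{g(t)}/(m-1))$. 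Setting $\lambda:=\boldsymbol{s}_{g(t_{\star})}/(m-1)$, the eigenvalue hypothesis yields $\ker F'_{t_{\star}}(1)=E_{\lambda}\neq 0$, where $E_{\lambda}$ denotes the $\lambda$-eigenspace of $-\Delta_{h}$; since $\lambda\neq 0$, every element of $E_{\lambda}$ is non-constant.

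To trigger bifurcation I would compare Morse indices along the given sequences. The Morse index of $F'_{t}(1)$ equals the number of eigenvalues of $-\Delta_{h}$, counted with multiplicity, strictly below $\boldsymbol{s}_{g(t)}/(m-1)$. For $j$ large, $\boldsymbol{s}_{g(r_{j})}/(m-1)$ and $\boldsymbol{s}_{g(s_{j})}/(m-1)$ lie in a small neighborhood of $\lambda$ containing no other eigenvalue of $-\Delta_{h}$, and \eqref{eq:key} places them on opposite sides of $\lambda$. Hence the Morse indices of $F'_{r_{j}}(1)$ and $F'_{s_{j}}(1)$ differ by exactly $\dim E_{\lambda}\ge 1$. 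This nonzero crossing number is exactly the input required by the cited potential-operator bifurcation theorems, and so forces $t_{\star}$ to be a bifurcation point for $\{F_{t}\}$; applying $\varphi^{*}$ to the resulting non-constant solutions then yields the sequences demanded by Definition \ref{def:ParaphraseBifurcation}.

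The step I expect to require the most care is the last one: matching the precise formulation of the chosen bifurcation theorem to our setting, in which the Morse-index jump is asserted only along one-sided sequences $r_{j}\nearrow t_{\star}$, $s_{j}\searrow t_{\star}$ rather than uniformly on punctured neighborhoods. The reduction to $N$ is designed so that $t$ enters only through the scalar $\boldsymbol{s}_{g(t)}$, which keeps both the ambient Banach space and the family of operators as clean as possible; what remains is to read off the version of the Smoller--Wasserman or Kielh\"ofer statement that accepts exactly this kind of one-sided crossing hypothesis.
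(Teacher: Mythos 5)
Your proposal is correct and follows essentially the same route as the paper: reduce to the fixed base $(N,h)$ via $\varphi^{*}$ using the Laplacian-commuting property, observe that the reduced operator is the $L^2$-gradient of a potential, detect a Morse-index jump across $t_{\star}$ from \eqref{eq:sequence}--\eqref{eq:key} and the discreteness of $\Spec(-\Delta_h)$, and invoke Smoller--Wasserman. The only detail you leave open (how to turn one-sided sequence data into a bifurcation at $t_{\star}$ itself) is resolved in the paper exactly as one would expect: apply the theorem on each interval $(r_j,s_j)$ to get bifurcation instants $t_j\to t_{\star}$ and then use closedness of the bifurcation set; and one should also note, as the paper does, that $v_j\not\equiv 1$ forces $v_j$ nonconstant because $\boldsymbol{s}_{g(t_{\star})}\neq 0$ rules out other positive constant solutions near $t_{\star}$.
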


\begin{remark}
If $\boldsymbol{s}_{g(t)}$ is monotone near $t_{\star}$, then there exist such sequences $\{r_j\}_{j\ge 1}$, $\{s_j\}_{j\ge 1}\subset I$ as in the assumption of Theorem \ref{thm:ExistenceOfBifurcationPoint}. 
\end{remark}

\begin{proof}
Define the nonlinear operator $T: C^{k+2, \alpha}_+(N, h)\times I\to C^{k, \alpha}(N, h)$ by
\begin{equation*}
T(v, t)=-a_m\Delta_hv+\boldsymbol{s}_{g(t)}(v-v^{p_m-1}), 
\end{equation*}
which has the potential $F: C^{k+2, \alpha}_+(N, h)\times I\to\mathbb{R}$, 
\begin{equation*}
F(v, t)=\int_N\frac{a_m}{2}\lvert dv\rvert^2+\boldsymbol{s}_{g(t)}\left(\frac{v^2}{2}-\frac{v^{p_m}}{p_m}\right)d\mu_h
\end{equation*}
with respect to the inner product of $L^2(N, h)$. 
Note that 
\begin{align*}
\left.\frac{\partial T}{\partial v}\right|_{(v\equiv 1, t)}f&=-a_m\left(\Delta_hf+\frac{\boldsymbol{s}_{g(t)}}{m-1}f\right)
\end{align*}
holds for all $f\in C^{k+2, \alpha}(N, h)$. 
Hence, with respect to the functional $F(\cdot, t)$ for each fixed $t\in I$, 
$v\equiv 1$ is a degenerate critical point if $\boldsymbol{s}_{g(t)}/(m-1)$ is an eigenvalue of $-\Delta_h$, and 
if $v\equiv 1$ is nondegenerate, then its Morse index is equal to the number of eigenvalues for $-\Delta_h$ strictly less than $\boldsymbol{s}_{g(t)}/(m-1)$, counted with multiplicity. 

By hypothesis, $v\equiv 1$ is a degenerate critical point for $F(\cdot, t_{\star})$. 
On the other hand, since the spectrum of $-\Delta_h$ is discrete and since $\boldsymbol{s}_{g(t)}$ depends on $t$ continuously, \eqref{eq:sequence} and \eqref{eq:key} imply that $v\equiv 1$ is nondegenerate with respect to $F(\cdot, r_j)$ and $F(\cdot, s_j)$ for all $r_j, s_j$ with $j$ sufficiently large. 
For such $j$, \eqref{eq:key} implies that the Morse indices of $v\equiv 1$ with respect to $F(\cdot, r_j)$, $F(\cdot, s_j)$ are different. 
Therefore, applying the bifurcation theorem of Smoller--Wasserman \cite[Theorem 2.1]{SmollerWasserman:1990} to the gradient operator $T$, we see that for every $j$ large there exists a real number $t_j\in (r_j, s_j)$ such that $(v\equiv 1, t)$ is a bifurcation point for the equation $T=0$. 
It then follows from \eqref{eq:sequence} that $t_{\star}=\lim_{j\to\infty}r_j=\lim_{j\to\infty}s_j$. 
Hence $(v\equiv 1, t_{\star})$ is a bifurcation point because the set $\{t\in I\mid (v\equiv 1, t) \ \text{is a bifurcation point for}\ T=0\}$ is closed in $I$. 
That is, there exist sequences $\{t_j\}_{j\ge 1}\subset I$, $\{v_j\}_{j\ge 1}\subset C^{k+2, \alpha}_+(N, h)$ such that $v_j\not\equiv 1$, $T(v_j, t_j)=0$ for all $j$ and that $t_j\to t_{\star}$, $v_j\to 1$ as $j\to\infty$. 

Since $\varphi$ is Laplacian-commuting (cf.~Sect.~\ref{sect:TotallyGeodesic}), if $T(v, t)=0$ for some $v\in C^{k+2, \alpha}_+(N, h)$ and $t\in I$, then 
\begin{equation}\label{eq:cscEqWithinTheProof}
-a_m\Delta_{g(t)}u+\boldsymbol{s}_{g(t)}(u-u^{p_m-1})=0
\end{equation}
for $u=\varphi^*v\in C^{k+2, \alpha}_+(M, g(t))$. 
In particular, $u_j:=\varphi^*v_j\not\equiv 1$ satisfies \eqref{eq:cscEqWithinTheProof} for all $j\ge 1$. 
Also, $u_j\to 1$ in $C^{k+2, \alpha}(M, g(t_{\star}))$ since the linear map $\varphi^*: C^{k+2, \alpha}(N, h)\to C^{k+2, \alpha}(M, g(t_{\star}))$ is continuous. 
Finally, $u_j\not\equiv 1$ implies that $u_j$ is nonconstant since $\boldsymbol{s}_{g(t_{\star})}$ is nonzero by hypothesis. 
Hence $g(t_{\star})$ is a bifurcation point along $\{g(t)\}_{t\in I}$ according to Definition \ref{def:ParaphraseBifurcation}. 
\end{proof}

\section{The canonical variation of a Riemannian submersion with totally geodesic fibers}\label{sect:TotallyGeodesic}
Recall that, for a smooth map $\varphi: (M, g)\to (N, h)$ of Riemannian manifolds, the following are equivalent: 
\begin{enumerate}
\item $\varphi$ is a Riemannian submersion each of whose fiber is a minimal submanifold of $(M, g)$. 
\item $\varphi$ is a Riemannian submersion and a harmonic map at the same time. 
\item $\varphi$ is Laplacian-commuting. That is, $\varphi^*\circ\Delta_h=\Delta_g\circ\varphi^*$. 
\end{enumerate}
See Eells--Sampson \cite{EellsSampson:1964}, Watson \cite{Watson:1973}. 
Such a map $\varphi$ is called a harmonic Riemannian submersion. 
In particular, a Riemannian submersion with totally geodesic fibers is harmonic. 

Consider the canonical variation 
\begin{align*}
(F^k, t\hat{g})\to (M^m, g(t))\xrightarrow{\varphi} (N^n, h)
\end{align*}
of a Riemannian submersion with totally geodesic fibers, $m\ge 3$. 
Assume henceforth that $g(t)$ has constant scalar curvature for every $t>0$; 
this is equivalent to saying that both the typical fiber $(F, \hat{g})$ and the base space $(N, h)$ have constant scalar curvature (cf. \cite[Proposition 3.2]{OtobaPetean:2016}).  
Let 
\begin{align*}
B&=\{t>0 \mid g(t)\ \text{is a bifurcation point for the csc equation along $\{g(t)\}_{t>0}$}\}, \\
D&=\{t>0 \mid \lambda=\boldsymbol{s}_{g(t)}/(m-1) \ \text{for a nonzero} \ \lambda\in\Spec(-\Delta_{g(t)})\}, \\
D_{\hor}&=\{t>0 \mid \lambda=\boldsymbol{s}_{g(t)}/(m-1) \ \text{for a nonzero} \ \lambda\in\Spec(-\Delta_{h})\}, 
\end{align*}
where $B\subset D$, $D_{\hor}\subset D$. 
On the one hand, every eigenvalue $\lambda\ge 0$ of $-\Delta_{g(t)}$ can be written as 
\begin{align*}
\lambda=b+\hat{\lambda}/t, 
\end{align*}
where $b$ and $\hat{\lambda}$ are respectively some eigenvalues of $-\Delta_{\hor}$ and $-\Delta_{\hat{g}}$. 
Here, $\Delta_{\hor}$ denotes the horizontal Laplacian (cf.~\cite{Berard-BergeryBourguignon:1982}, \cite[Remark 3.3]{BettiolPiccione:2013CalcVar}). 
On the other hand, 
\begin{align}\label{eq:ONeill}
\boldsymbol{s}_{g(t)}=\boldsymbol{s}_{h}+\boldsymbol{s}_{\hat{g}}/t-t\lvert A\rvert^2, 
\end{align}
where $A$ is the O'Neill's integrability tensor (see \cite[(9.70d)]{Besse:1987}).  
Hence, for $t>0$, $t\in D$ if and only if there exist some $b\in\Spec(-\Delta_{\hor})$, $\hat{\lambda}\in\Spec(-\Delta_{\hat{g}})$ such that $b+\hat{\lambda}\in\Spec(-\Delta_{g(1)})\setminus\{0\}$ and 
\begin{equation}\label{eq:degeneracy}
\left(b-\frac{\boldsymbol{s}_h}{m-1}\right)+\frac{1}{t}\left(\hat{\lambda}-\frac{\boldsymbol{s}_{\hat{g}}}{m-1}\right)+t\frac{\lvert A\rvert^2}{m-1}=0. 
\end{equation}

\begin{proposition}\label{prop:discrete}
The set $D$ of degeneracy instants is not discrete if and only if 
\begin{align}\label{split:nondiscrete}\begin{split}
&\lvert A\rvert=0, \\
&b-\frac{\boldsymbol{s}_h}{m-1}=0 \ \text{for some} \ b\in\Spec(-\Delta_{\hor}), \\
&\hat{\lambda}-\frac{\boldsymbol{s}_{\hat{g}}}{m-1}=0 \ \text{for some}\ \hat{\lambda}\in\Spec(-\Delta_{\hat{g}}), \\
&b+\hat{\lambda}\in\Spec(-\Delta_{g(1)})\setminus\{0\}
\end{split}\end{align}
holds, 
in which case $D$ is necessarily equal to $(0, \infty)$.  
\end{proposition}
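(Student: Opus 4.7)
The plan is to translate the degeneracy condition \eqref{eq:degeneracy} into a polynomial equation in $t$ and exploit that a polynomial of degree at most $2$ is either identically zero or has at most two zeros. Multiplying \eqref{eq:degeneracy} by $t>0$ shows that, for each admissible pair $(b,\hat\lambda)$---meaning $b\in\Spec(-\Delta_{\hor})$, $\hat\lambda\in\Spec(-\Delta_{\hat g})$, and $b+\hat\lambda\in\Spec(-\Delta_{g(1)})\setminus\{0\}$---we have $t\in(0,\infty)$ satisfying \eqref{eq:degeneracy} for this pair if and only if $p_{b,\hat\lambda}(t)=0$, where
\begin{equation*}
p_{b,\hat\lambda}(t):=\frac{\lvert A\rvert^2}{m-1}\,t^2+\left(b-\frac{\boldsymbol{s}_h}{m-1}\right)t+\left(\hat\lambda-\frac{\boldsymbol{s}_{\hat g}}{m-1}\right).
\end{equation*}
Thus $D$ is the union, over admissible pairs, of the positive zero sets of $p_{b,\hat\lambda}$; and $p_{b,\hat\lambda}$ vanishes identically precisely when its three coefficients do, i.e., when the first three conditions of \eqref{split:nondiscrete} hold for this pair.

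For the direction $(\Leftarrow)$, assume \eqref{split:nondiscrete}. The fourth condition says the pair $(b,\hat\lambda)$ guaranteed there is admissible, and the first three force $p_{b,\hat\lambda}\equiv 0$; hence every $t>0$ lies in $D$, so $D=(0,\infty)$, which is not discrete. This also settles the \emph{in which case} clause.

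For the converse, suppose $D$ is not discrete, so there exist pairwise distinct $t_j\in D$ converging to some $t_0>0$. Fix $\epsilon,M$ with $0<\epsilon<t_0<M$, so that $t_j\in[\epsilon,M]$ eventually, and for each $j$ pick an admissible pair $(b_j,\hat\lambda_j)$ with $p_{b_j,\hat\lambda_j}(t_j)=0$. Rearranging gives
\begin{equation*}
b_j+\frac{\hat\lambda_j}{t_j}=\frac{\boldsymbol{s}_h}{m-1}+\frac{1}{t_j}\cdot\frac{\boldsymbol{s}_{\hat g}}{m-1}-t_j\cdot\frac{\lvert A\rvert^2}{m-1},
\end{equation*}
whose right-hand side is uniformly bounded on $[\epsilon,M]$; since $b_j,\hat\lambda_j\ge 0$ and $t_j\ge\epsilon$, the sequences $(b_j)$ and $(\hat\lambda_j)$ are themselves uniformly bounded. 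Discreteness of $\Spec(-\Delta_{g(1)})$ then limits $b_j+\hat\lambda_j$ to finitely many values, and each value admits only finitely many splittings into an $(b,\hat\lambda)$-pair, so pigeonhole yields a single pair $(b,\hat\lambda)$ used for infinitely many $j$. Consequently $p_{b,\hat\lambda}$ has infinitely many zeros, hence vanishes identically, which gives the first three conditions of \eqref{split:nondiscrete}; admissibility of this pair supplies the fourth.

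The main obstacle is the bookkeeping step showing that only finitely many admissible pairs can contribute zeros inside a compact subinterval of $(0,\infty)$. Once the rearrangement above---together with nonnegativity of Laplacian eigenvalues and discreteness of $\Spec(-\Delta_{g(1)})$---provides this finiteness, the rest collapses to the elementary observation that a polynomial of degree at most $2$ with infinitely many zeros is identically zero.
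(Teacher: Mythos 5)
Your proof is correct and follows essentially the same route as the paper's: both arguments confine the relevant $(b,\hat\lambda)$ pairs to a finite set by using discreteness of $\Spec(-\Delta_{\hat g})$ and $\Spec(-\Delta_{g(1)})$ (since $\Spec(-\Delta_{\hor})$ may fail to be discrete), and then conclude from the fact that a not-identically-zero quadratic in $t$ has at most two roots. The only cosmetic difference is that you organize the finiteness via a pigeonhole count on a compact window $[\epsilon,M]$, whereas the paper extracts nested constant subsequences; the underlying ideas are the same.
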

\begin{remark}
The condition $\lvert A\rvert=0$ amounts to saying that $(M, g(1))$ is locally the direct product $(N, h)\times(F, \hat{g})$. 
If $(M, g(1))$ is globally the direct product $(N, h)\times(F, \hat{g})$, then \eqref{split:nondiscrete} holds if and only if $(N, h)$, $(F, \hat{g})$ is a non-degenerate pair in the sense of de Lima--Piccione--Zedda \cite[p.\ 269]{deLimaPiccioneZedda:2012Poincare}. 
\end{remark}
\begin{proof}
If \eqref{split:nondiscrete} holds, then it follows from \eqref{eq:degeneracy} that $D=(0, \infty)$, which is not discrete.  

Conversely, assume that \eqref{split:nondiscrete} does not hold.  
We prove discreteness of $D$ by showing that every convergent sequence in $D$ has a constant subsequence.  
Let $\{t_l\}_{l\ge 1}\subset D$ be convergent. 
By definition of $D$, there is a nonzero eigenvalue $\lambda(l)$ of $-\Delta_{g(t_l)}$ such that $\lambda(l)=\boldsymbol{s}_{g(t_l)}/(m-1)$ for each $l\ge 1$. 
We write $\lambda(l)=b(l)+\hat{\lambda}(l)/t_l$ 
for some $b(l)\in\Spec(-\Delta_{\hor})$, $\hat{\lambda}(l)\in\Spec(-\Delta_{\hat{g}})$. 

We claim that, after taking a subsequence, both $\hat{\lambda}(l)$ and $b(l)$ are constant in $l$. 
Note that the corresponding sequence $\boldsymbol{s}_{g(t_l)}$ converges by smoothness in $t$ of the family $\{g(t)\}_{t>0}$. 
Hence $\lambda(l)=\boldsymbol{s}_{g(t_l)}/(m-1)$ converges. 
Since $0\le\hat{\lambda}(l)=t_l\left(\lambda(l)-b(l)\right)\le t_l\lambda(l)$ by nonnegativity of $\Spec(-\Delta_{\hat{g}})$ and $\Spec(-\Delta_{\hor})$, it follows from discreteness of $\Spec(-\Delta_{\hat{g}})$ that the bounded sequence $\hat{\lambda}(l)$ has a constant subsequence. 
In particular, $b(l)=\lambda(l)-\hat{\lambda}(l)/t_l$ also converges as $l\to\infty$. 
The sequence $b(l)+\hat{\lambda}(l)$ of eigenvalues for $-\Delta_{g(1)}$ is then convergent and therefore eventually constant by discreteness of $\Spec(-\Delta_{g(1)})$. 
Therefore, $b(l)$ is also constant for $l$ large. 
Note that the spectrum of the horizontal Laplacian may not be discrete \cite[Warning 3.2]{Berard-BergeryBourguignon:1982}. 

Assume henceforth that $\hat{\lambda}(l)$, $b(l)$ are constant in $l$. 
Recall that $\lambda(l)=\boldsymbol{s}_{g(t_l)}/(m-1)$ is equivalent to  
\begin{align*}
\left(b(l)-\frac{\boldsymbol{s}_h}{m-1}\right)+\frac{1}{t_l}\left(\hat{\lambda}(l) -\frac{\boldsymbol{s}_{\hat{g}}}{m-1}\right)+t_l\frac{\lvert A\rvert^2}{m-1}=0. 
\end{align*}
Since \eqref{split:nondiscrete} does not hold, the polynomial equation 
\begin{align*}
\left(b(l)-\frac{\boldsymbol{s}_h}{m-1}\right)+\frac{1}{t}\left(\hat{\lambda}(l) -\frac{\boldsymbol{s}_{\hat{g}}}{m-1}\right)+t\frac{\lvert A\rvert^2}{m-1}=0
\end{align*}
of $t$ with constant coefficients has at most two roots. 
It follows that $t_l$ must be one of these two roots, and we conclude that $\{t_l\}_{l\ge 1}$ has a subsequence which is eventually constant, showing discreteness of $D$. 
\end{proof}

\begin{lemma}\label{lem:OnlyHorizontalLinear}
Suppose the first nonzero eigenvalue of $-\Delta_{\hat{g}}$ satisfies 
\begin{equation}\label{eq:pseudostability}
\hat{\lambda}_1>\boldsymbol{s}_{\hat{g}}/(m-1). 
\end{equation}
Then, for every $t>0$ sufficiently small, $\Delta_{g(t)}f+\frac{\boldsymbol{s}_{g(t)}}{m-1}f=0$ for a nonzero $f\in C^{k+2, \alpha}(M, g(t))$ implies $f$ is constant along the fibers of $\varphi$ and $\frac{\boldsymbol{s}_{g(t)}}{m-1}\in\Spec(-\Delta_h)$. 
\end{lemma}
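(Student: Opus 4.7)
The plan is to combine the joint fiber/horizontal spectral decomposition of $L^2(M,g(t))$ available for a Riemannian submersion with totally geodesic fibers with the algebraic content of the degeneracy equation \eqref{eq:degeneracy}.

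First, I will decompose $f$ using the fiberwise Laplacian. The totally geodesic hypothesis implies that, for each $\hat{\lambda}\in\Spec(-\Delta_{\hat{g}})$, the fiberwise $\hat{\lambda}$-eigenspace is invariant under $-\Delta_{g(t)}$, on which $-\Delta_{g(t)}$ acts as $-\Delta_{\hor}+\hat{\lambda}/t$ (cf.~\cite{Berard-BergeryBourguignon:1982}). Hence the eigenfunction $f$ splits as a finite sum $f=\sum_{\hat{\lambda}}f_{\hat{\lambda}}$, and each nonzero summand $f_{\hat{\lambda}}$ is fiberwise a $\hat{\lambda}$-eigenfunction of $-\Delta_{\hat{g}}$ and horizontally satisfies $-\Delta_{\hor}f_{\hat{\lambda}}=b\,f_{\hat{\lambda}}$ with $b:=\boldsymbol{s}_{g(t)}/(m-1)-\hat{\lambda}/t\ge 0$. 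Substituting the O'Neill formula \eqref{eq:ONeill} then recasts the identity $b+\hat{\lambda}/t=\boldsymbol{s}_{g(t)}/(m-1)$ as the degeneracy equation \eqref{eq:degeneracy} for each pair $(b,\hat{\lambda})$ that appears.

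Next, I will show that \eqref{eq:pseudostability} rules out $\hat{\lambda}>0$ for small $t$. If $\hat{\lambda}>0$, then $\hat{\lambda}\ge\hat{\lambda}_1$, so $\hat{\lambda}-\boldsymbol{s}_{\hat{g}}/(m-1)\ge\delta:=\hat{\lambda}_1-\boldsymbol{s}_{\hat{g}}/(m-1)>0$, and the middle term of \eqref{eq:degeneracy} is bounded below by $\delta/t$. Since $b\ge 0$ makes the first term at least $-\boldsymbol{s}_h/(m-1)$ and the third term is nonnegative, the left-hand side of \eqref{eq:degeneracy} is strictly positive once $\delta/t>\max(\boldsymbol{s}_h,0)/(m-1)$, an explicit threshold depending only on $\delta$ and $\boldsymbol{s}_h$. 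Increasing $\hat{\lambda}$ only enlarges the left-hand side, so the threshold is uniform in $\hat{\lambda}$.

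For $t$ below this threshold, only $f_0$ can be nonzero, meaning $f$ is fiberwise in the kernel of $-\Delta_{\hat{g}}$ and hence constant along the connected fibers of $\varphi$. Writing $f=\varphi^*g$ for some $g\in C^{k+2,\alpha}(N,h)$, Laplacian-commutativity $\varphi^*\circ\Delta_h=\Delta_{g(t)}\circ\varphi^*$ immediately gives $-\Delta_hg=(\boldsymbol{s}_{g(t)}/(m-1))\,g$, whence $\boldsymbol{s}_{g(t)}/(m-1)\in\Spec(-\Delta_h)$. The only delicate point is the fiberwise spectral decomposition of $f$, but since $f$ lives in the finite-dimensional $\boldsymbol{s}_{g(t)}/(m-1)$-eigenspace of $-\Delta_{g(t)}$ and commutativity of $\Delta_{\hor}$ with the fiber Laplacian is standard in the totally geodesic setting, no further analytic subtlety is required.
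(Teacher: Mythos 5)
Your proof is correct and follows essentially the same route as the paper. The paper invokes the fact (cited from B\'erard-Bergery--Bourguignon) that every eigenvalue of $-\Delta_{g(t)}$ has the form $b+\hat\lambda/t$ with $b\in\Spec(-\Delta_{\hor})$, $\hat\lambda\in\Spec(-\Delta_{\hat g})$, and then derives the same inequality $0 \ge -\boldsymbol{s}_h/(m-1) + \bigl(\hat\lambda_1 - \boldsymbol{s}_{\hat g}/(m-1)\bigr)/t$ to exclude $\hat\lambda>0$ for small $t$; you spell out in addition the fiberwise spectral decomposition $f=\sum_{\hat\lambda}f_{\hat\lambda}$ and why each summand is a joint eigenfunction, which the paper leaves implicit, but the key estimate and its consequence are identical.

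One small remark: you write $b:=\boldsymbol{s}_{g(t)}/(m-1)-\hat\lambda/t\ge 0$. Nonnegativity of $b$ is used in your lower bound for the first term of \eqref{eq:degeneracy} and does hold because $b\in\Spec(-\Delta_{\hor})$, but it is worth stating that this is the reason (rather than leaving it as an unsupported inequality), since $\boldsymbol{s}_{g(t)}/(m-1)-\hat\lambda/t$ is not manifestly nonnegative from its expression alone.
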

\begin{remark}
Under the assumption \eqref{eq:pseudostability}, the set $D$ is discrete by Proposition \ref{prop:discrete}. 
Also, \eqref{eq:pseudostability} is fulfilled if $\hat{\lambda}_1\ge\boldsymbol{s}_{\hat{g}}/ (k-1)>0$ and $n\ge 1$ or if $\boldsymbol{s}_{\hat{g}}\le 0$. 
When $k\ge 3$, the inequality $\hat{\lambda}_1\ge\boldsymbol{s}_{\hat{g}}/ (k-1)$ is equivalent to the stability of the critical point $\hat{g}$ with respect to the Einstein--Hilbert functional restricted to its conformal class. 
\end{remark}
\begin{proof}
For a fixed $t>0$, suppose $\Delta_{g(t)}f+\frac{\boldsymbol{s}_{g(t)}}{m-1}f=0$ holds for a nonzero $f\in C^{k+2, \alpha}(M, g(t))$. 
Then $\frac{\boldsymbol{s}_{g(t)}}{m-1}=b+\hat{\lambda}/t$ for some $b\in\Spec(-\Delta_{\hor})$, $\hat{\lambda}\in\Spec(-\Delta_{\hat{g}})$. 
Observe from \eqref{eq:ONeill} that, if $\hat{\lambda}>0$, then 
\begin{equation*}
0=\left(b-\frac{\boldsymbol{s}_h}{m-1}\right)+\frac{1}{t}\left(\hat{\lambda}-\frac{\boldsymbol{s}_{\hat{g}}}{m-1}\right)+t\frac{\lvert A\rvert^2}{m-1}\ge -\frac{\boldsymbol{s}_h}{m-1}+\frac{1}{t}\left(\hat{\lambda}_1-\frac{\boldsymbol{s}_{\hat{g}}}{m-1}\right). 
\end{equation*}
That is, if 
\begin{equation}\label{eq:StabilityRelatedInequality}
\frac{\boldsymbol{s}_h}{m-1}<\frac{1}{t}\left(\hat{\lambda}_1-\frac{\boldsymbol{s}_{\hat{g}}}{m-1}\right), 
\end{equation}
then $\hat{\lambda}=0$. 

By hypothesis, \eqref{eq:StabilityRelatedInequality} holds for every $t>0$ sufficiently small. 
Therefore, for such a $t>0$, 
$\Delta_{g(t)}f+\frac{\boldsymbol{s}_{g(t)}}{m-1}f=0$ for a nonzero $f\in C^{k+2, \alpha}(M, g(t))$ implies that $f$ is an eigenfunction of $-\Delta_{\hor}$ and is constant along the fibers of $\varphi$. Since $\varphi$ is Laplacian-commuting, $\frac{\boldsymbol{s}_{g(t)}}{m-1}\in\Spec(-\Delta_h)$. 
\end{proof}

\begin{theorem}\label{thm:CanonicalVariation}
If $\boldsymbol{s}_{\hat{g}}>0$, then there exists a sequence $\{b_l\}_{l\ge 1}\subset (0, \infty)$ such that $b_{l+1}<b_l$ for all $j$, $\lim_{l\to\infty}b_l=0$, and 
\begin{equation}\label {eq:thmCanonicalVariation1}
D_{\hor}=\{b_l\mid l\ge 1\}\subset B. 
\end{equation}
If moreover $\lambda_1(-\Delta_{\hat{g}})>\boldsymbol{s}_{\hat{g}}/(m-1)$, 
then there exists such a real number $\varepsilon>0$ that 
\begin{equation}\label {eq:thmCanonicalVariation2}
D_{\hor}\cap (0, \varepsilon)=D\cap (0, \varepsilon)=B\cap (0, \varepsilon) 
\end{equation}
and that the following holds: 
If $\{t_j\}_{j\ge 1}$ is a sequence of $(0, \infty)$ such that $\lim_{j\to\infty}t_j=t_{\star}\in B\cap (0, \varepsilon)$ and $\{u_j\}_{j\ge 1}$ is a sequence of $C^{k+2, \alpha}_+(M, g(t_{\star}))$ such that $\tilde{g}_j=u_j^{p_m-2}g(t_j)$ has constant scalar curvature for all $j\ge 1$ and $\lim_{j\to\infty}u_j=1$, then $u_j$ is constant along the fibers of $\varphi$ for $j$ sufficiently large. 
\end{theorem}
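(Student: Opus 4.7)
My plan is to combine the strict monotonicity of $t\mapsto\boldsymbol{s}_{g(t)}$ with Theorem \ref{thm:ExistenceOfBifurcationPoint} for the first part, and then upgrade Theorem \ref{thm:OnlyHorizontalNonlinear} to a statement uniform in the parameter $t$ for the last claim. To begin, I would use \eqref{eq:ONeill} and the csc hypothesis to see that $\lvert A\rvert^2$ is forced to be constant on $M$, so that $\boldsymbol{s}_{\hat g}>0$ makes the derivative $d\boldsymbol{s}_{g(t)}/dt=-\boldsymbol{s}_{\hat g}/t^2-\lvert A\rvert^2$ strictly negative and $t\mapsto\boldsymbol{s}_{g(t)}$ a strictly decreasing homeomorphism $(0,\infty)\to\mathbb{R}$. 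Listing the nonzero eigenvalues of $-\Delta_h$ as $0<\mu_1<\mu_2<\cdots\to\infty$ and defining $b_l$ to be the unique positive root of $\boldsymbol{s}_{g(t)}/(m-1)=\mu_l$ then produces a strictly decreasing sequence with $b_l\to 0$ that enumerates $D_{\hor}$. At each $t_\star=b_l$ the same monotonicity supplies sequences $r_j<t_\star<s_j$ verifying \eqref{eq:key}, and Theorem \ref{thm:ExistenceOfBifurcationPoint} then places $b_l$ in $B$.

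For the first assertion of the second part, under the extra hypothesis $\lambda_1(-\Delta_{\hat g})>\boldsymbol{s}_{\hat g}/(m-1)$, Lemma \ref{lem:OnlyHorizontalLinear} supplies $\varepsilon>0$ such that for every $t\in(0,\varepsilon)$ any nontrivial solution of $\Delta_{g(t)}f+\boldsymbol{s}_{g(t)}f/(m-1)=0$ is fiber-constant with $\boldsymbol{s}_{g(t)}/(m-1)\in\Spec(-\Delta_h)$. After shrinking $\varepsilon$ so that $\boldsymbol{s}_{g(t)}>0$ throughout (possible since $\boldsymbol{s}_{g(t)}\to+\infty$ as $t\to 0$), this gives $D\cap(0,\varepsilon)\subset D_{\hor}$; the reverse inclusion is automatic by $\varphi^*$-pullback of eigenfunctions, and together with $B\subset D$ (bifurcation forces a degenerate linearization) and $D_{\hor}\subset B$ from part 1 this yields the triple equality \eqref{eq:thmCanonicalVariation2}.

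For the uniform conformal-factor statement, I would rerun the proof of Theorem \ref{thm:OnlyHorizontalNonlinear} with $t$ as an auxiliary parameter. Fix $t_\star\in B\cap(0,\varepsilon)$; after an elementary constant rescaling $u_j\mapsto a_ju_j$ with $a_j\to 1$ chosen so that the scalar curvature of $\tilde g_j$ is exactly $\boldsymbol{s}_{g(t_j)}$, one may assume $u_j$ solves \eqref{eq:cscEq} at $t=t_j$. Identify $C^{k+2,\alpha}(M,g(t))$ with $C^{k+2,\alpha}(M,g(t_\star))$ as Banach spaces for $t$ near $t_\star$, and consider $\tilde S(u,t)=-a_m\Delta_{g(t)}u+\boldsymbol{s}_{g(t)}(u-u^{p_m-1})$. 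Its partial linearization at $(1,t_\star)$ is the Fredholm operator $L=-a_m(\Delta_{g(t_\star)}+\boldsymbol{s}_{g(t_\star)}/(m-1))$ whose kernel $N$ is fiber-constant by Lemma \ref{lem:OnlyHorizontalLinear}, and the $\varphi^*$-argument at $t=t_\star$ gives the decompositions $X^i=N\oplus V^i$, $Y^i=W\oplus R^i$ with $V^1=V^2\cap X^1$, $W\subset Y^1$, and $R^1=R^2\cap Y^1$ required by Lemma \ref{lem:DoubleLyapunovSchmidt}. Applying the implicit function theorem jointly in $(u,t)$ to the auxiliary operators $\varphi^i(u,t)=P_{R^i}\tilde S(u,t)$ produces $C^1$ maps $\alpha^i(n,t)\in V^i_0$, and the diagram-chase from the proof of Lemma \ref{lem:DoubleLyapunovSchmidt} yields $\alpha^1(n,t)=\alpha^2(n,t)$ at each $t$. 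Hence every zero of $\tilde S$ in a uniform neighborhood of $(1,t_\star)$ in $X^2\times(t_\star-\delta,t_\star+\delta)$ is fiber-constant, so $u_j\to 1$ and $t_j\to t_\star$ force $u_j$ to be fiber-constant for $j$ large.

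The step I expect to require the most care is this parameter-dependent double reduction. Since the projections $P_{R^i}$ and complements $V^i$ are built from the fixed operator $L$ at $t=t_\star$, the only points to verify are that $\partial_v\varphi^i|_{(1,t_\star)}:V^i\to R^i$ is an isomorphism (true by construction) and that the diagrams \eqref{eq:OperatorCommutativity} and \eqref{eq:ProjectionInclusionCommutativity} persist for $\tilde S(\cdot,t)$ and the fixed $P_{R^i}$, which is immediate because $\tilde S(\cdot,t)$ still maps $X^1$ into $Y^1$ by Laplacian-commutativity of $\varphi$ for each metric $g(t)$ and the projection identities are purely algebraic.
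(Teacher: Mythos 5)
Your proof is correct and follows the same overall architecture as the paper: strict monotonicity of $t\mapsto\boldsymbol{s}_{g(t)}$ from \eqref{eq:ONeill} plus Theorem~\ref{thm:ExistenceOfBifurcationPoint} to get $D_{\hor}\subset B$ and the enumeration $\{b_l\}$, then Lemma~\ref{lem:OnlyHorizontalLinear} and the sandwich $D_{\hor}\subset B\subset D$ to obtain \eqref{eq:thmCanonicalVariation2}, and finally the double Lyapunov--Schmidt machinery of Sect.~\ref{sect:doubleLyapunvSchmidt} for the fiber-constancy of the conformal factors. The one place your write-up diverges is the final claim: the paper disposes of it in one sentence by citing Theorem~\ref{thm:OnlyHorizontalNonlinear} for the fixed metric $g(t_\star)$, whereas you correctly observe that the hypothesis involves $\tilde g_j=u_j^{p_m-2}g(t_j)$ with $t_j\to t_\star$ but $t_j\neq t_\star$ in general, so the theorem as stated does not literally apply, and you supply the needed parameter-dependent version of Lemma~\ref{lem:DoubleLyapunovSchmidt} (auxiliary operators $\varphi^i(u,t)=P_{R^i}\tilde S(u,t)$, implicit function theorem joint in $(u,t)$, projections and complements frozen at $t_\star$). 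This is a more faithful account of what is actually required; the constant rescaling $u_j\mapsto a_ju_j$ to normalize the scalar curvature to $\boldsymbol{s}_{g(t_j)}$ is also a step the paper leaves tacit and you make explicit. Two cosmetic points: when $\lvert A\rvert=0$ the map $t\mapsto\boldsymbol{s}_{g(t)}$ is a strictly decreasing homeomorphism onto $(\boldsymbol{s}_h,\infty)$, not onto all of $\mathbb{R}$, so the enumeration of $D_{\hor}$ must start at the first eigenvalue $\mu_l$ exceeding $\boldsymbol{s}_h/(m-1)$ (the paper builds in a shift index $i\ge 0$ for exactly this reason); and the extra shrinking of $\varepsilon$ to ensure $\boldsymbol{s}_{g(t)}>0$ is not needed, since $t\in D$ already forces $\boldsymbol{s}_{g(t)}\neq 0$ by definition. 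Neither affects the validity of the argument.
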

\begin{proof}
Assume $\boldsymbol{s}_{\hat{g}}>0$. 
Then $\boldsymbol{s}_{g(t)}=\boldsymbol{s}_{h}+\boldsymbol{s}_{\hat{g}}/t-t\lvert A\rvert^2$ is strictly monotone in $t$ and $\lim_{t\to 0}\boldsymbol{s}_{g(t)}=\infty$. 
Hence there exists a sequence $\{b_j\}_{j\ge 1}\subset (0, \infty)$ such that $b_{j+1}<b_j$, $b_j\to 0$, and $D_{\hor}=\{b_j\mid j\ge 1\}$.  
Explicitly, $b_j$ is the solution of the equation $\lambda_{i+j}(-\Delta_h)=\boldsymbol{s}_{g(t)}/(m-1)$ for some $i\ge 0$, where $\lambda_{i+j}(-\Delta_h)$ is the $(i+j)$-th eigenvalue counted without multiplicity. 
We apply Theorem \ref{thm:ExistenceOfBifurcationPoint} for each $b_j=:t_{\star}$ to see $D_{\hor}\subset B$. 
This shows \eqref{eq:thmCanonicalVariation1}. 
Note that we allow the larger set $D$ of degeneracy instants to be the whole $(0, \infty)$. See Proposition \ref{prop:discrete}. 

Assume moreover $\lambda_1(-\Delta_{\hat{g}})>\boldsymbol{s}_{\hat{g}}/(m-1)$, so that in particular $D$ is discrete. 
For a small $\varepsilon>0$, we see $D\cap (0, \varepsilon)\subset D_{\hor}\cap (0, \varepsilon)$ from Lemma \ref{lem:OnlyHorizontalLinear}. 
This together with \eqref{eq:thmCanonicalVariation1} implies \eqref{eq:thmCanonicalVariation2}. 
The rest of the statement follows from Theorem \ref{thm:OnlyHorizontalNonlinear} applied to $g(t_{\star})$ for each fixed $t_{\star}\in B\cap (0, \varepsilon)$. 
\end{proof}

\begin{corollary}\label{cor:OpenProblem}
Assume $\lambda_1(-\Delta_{\hat{g}})>\boldsymbol{s}_{\hat{g}}/(m-1)>0$. 
If either $\boldsymbol{s}_h\le 0$, $\lvert A\rvert>0$, or $(M, g(1))=(N, h)\times(F, \hat{g})$ and $\lambda_1(-\Delta_{h})>\boldsymbol{s}_{h}/(m-1)>0$, 
then there exists a compact interval $I$ of $(0, \infty)$ such that 
$D \cap \left((0, \infty)\setminus I\right) = B \cap \left((0, \infty)\setminus I\right)$. 
\end{corollary}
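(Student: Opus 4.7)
The plan is to split the argument at $t \to 0^+$ and $t \to \infty$. Under the standing hypothesis $\lambda_1(-\Delta_{\hat{g}}) > \boldsymbol{s}_{\hat{g}}/(m-1) > 0$, Theorem \ref{thm:CanonicalVariation} already supplies an $\varepsilon_0 > 0$ with $D \cap (0, \varepsilon_0) = B \cap (0, \varepsilon_0)$, so it suffices to exhibit some $T > 0$ with $D \cap (T, \infty) = B \cap (T, \infty)$; the desired interval will then be any compact $I \subset (0, \infty)$ containing $[\varepsilon_0, T]$.

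For the first two alternatives I would show that $D$ is bounded in $(0, \infty)$, so that $D \cap (T, \infty) = \emptyset \supseteq B \cap (T, \infty)$ for $T$ sufficiently large. If $|A| > 0$, then by \eqref{eq:ONeill} one has $\boldsymbol{s}_{g(t)} \to -\infty$ as $t \to \infty$, and since $t \in D$ would force $\boldsymbol{s}_{g(t)}/(m-1) \in \Spec(-\Delta_{g(t)})\setminus\{0\} \subset (0, \infty)$, no large $t$ can lie in $D$. If instead $\boldsymbol{s}_h \le 0$, then the inequality \eqref{eq:StabilityRelatedInequality} holds for every $t > 0$, so the proof of Lemma \ref{lem:OnlyHorizontalLinear} yields $D = D_{\hor}$; but $t \in D_{\hor}$ requires $\boldsymbol{s}_{g(t)}/(m-1) \ge \lambda_1(-\Delta_h) > 0$, which is incompatible with $\boldsymbol{s}_{g(t)}\to\boldsymbol{s}_h\le 0$ (or $-\infty$ if $|A|>0$) as $t\to\infty$.

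For the third alternative, namely $(M, g(1)) = (N, h) \times (F, \hat{g})$ globally with $\lambda_1(-\Delta_h) > \boldsymbol{s}_h/(m-1) > 0$, I would exploit the symmetry between base and fiber. Setting $g^*(s) := sh + \hat{g}$, one has $g(t) = t\, g^*(1/t)$, and $\{g^*(s)\}_{s>0}$ is itself the canonical variation of the other projection $(N, sh)\to (M, g^*(s)) \to (F, \hat{g})$, still a Riemannian submersion with totally geodesic fibers and $|A^*|=0$. Because \eqref{eq:cscEq} is invariant under constant rescaling of the metric (under $g \mapsto cg$ both $a_m\Delta$ and $\boldsymbol{s}$ scale by $1/c$, and the Banach spaces $C^{k+2,\alpha}_+(M, g(t_\star))$ and $C^{k+2,\alpha}_+(M, g^*(1/t_\star))$ coincide since the two metrics differ only by the positive constant $t_\star$), a direct comparison shows that $t_\star$ is a bifurcation (respectively degeneracy) instant for $\{g(t)\}$ if and only if $1/t_\star$ is one for $\{g^*(s)\}$. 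The assumption $\lambda_1(-\Delta_h) > \boldsymbol{s}_h/(m-1) > 0$ is now precisely the hypothesis of Theorem \ref{thm:CanonicalVariation} for this reversed canonical variation, so that theorem supplies $\varepsilon_1 > 0$ with $D^* \cap (0, \varepsilon_1) = B^* \cap (0, \varepsilon_1)$; translating via $s = 1/t$ yields $D \cap (1/\varepsilon_1, \infty) = B \cap (1/\varepsilon_1, \infty)$, as required. I expect the main obstacle to be carefully setting up this bijection between bifurcation and degeneracy sets under the simultaneous rescaling $g(t) = t\, g^*(1/t)$ and reparametrization $s = 1/t$, and checking that $\{g^*(s)\}$ really is a canonical variation to which Theorem \ref{thm:CanonicalVariation} applies; once this bookkeeping is in place, combining the two applications of Theorem \ref{thm:CanonicalVariation} gives the compact interval.
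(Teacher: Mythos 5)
Your proposal is correct and takes essentially the same approach as the paper's proof, filling in the details of the paper's very terse three-sentence argument (in particular the scale-invariance of \eqref{eq:cscEq} under $g\mapsto cg$ that underlies the base--fiber interchange). One minor remark: in the $\boldsymbol{s}_h\le 0$ case, once you have observed that \eqref{eq:StabilityRelatedInequality} holds for all $t>0$ and hence $D=D_{\hor}$ globally, the inclusions $D_{\hor}\subset B\subset D$ from \eqref{eq:thmCanonicalVariation1} immediately give $D=B$ on all of $(0,\infty)$ (which is how the paper phrases it), so the additional step showing $D_{\hor}$ is empty for large $t$, while correct, is not needed.
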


\begin{proof}
If $\boldsymbol{s}_h\le 0$, then we can replace $(0, \varepsilon)$ in \eqref{eq:thmCanonicalVariation2} with $(0, \infty)$ because \eqref{eq:StabilityRelatedInequality} holds for all $t>0$. 
If $\lvert A\rvert>0$, then $\lim_{t\to\infty}\boldsymbol{s}_{g(t)}=-\infty$ and no degeneration occurs for $t$ sufficiently large. 
If $(M, g(1))=(N, h)\times(F, \hat{g})$ and $\lambda_1(-\Delta_{h})>\boldsymbol{s}_{h}/(m-1)>0$, then we interchange the role of $(N, h)$, $(F, \hat{g})$ and apply Theorem \ref{thm:CanonicalVariation}.  
\end{proof}

An open problem in view of Proposition \ref{prop:discrete} is to ask whether $D=B$ holds whenever $D$ is discrete. 
Corollary \ref{cor:OpenProblem} provides a partial answer outside the compact interval $I$. 
In the case of product manifolds, de Lima--Piccione--Zedda \cite{deLimaPiccioneZedda:2012Poincare} addresses this question. 
Bettiol--Piccione \cite{BettiolPiccione:2013CalcVar} essentially shows $D_{\hor}=D=B$ in the case of Hopf fibrations. 
Last but not least, we remark that Theorem \ref{thm:OnlyHorizontalNonlinear} implies that, in the case of quaternionic Hopf fibrations, no symmetry-breaking bifurcation occurs except at the round metric. 

\subsection*{Acknowledgements}
N.~Otoba is supported by the DFG (Deutsche Forschungsgemeinschaft), SFB 1085 Higher Invariants.  J.~Petean is supported by Grant 220074 Fondo Sectorial de Investigaci\'on para la Educaci\'on CONACYT.

\end{document}